\newtheorem{theorem}{Theorem}[section]
\newtheorem{corollary}[theorem]{Corollary}
\newtheorem{lemma}[theorem]{Lemma}
\theoremstyle{definition}
\newtheorem{remark}[theorem]{Remark}
\numberwithin{equation}{section}
\DeclareMathOperator{\wt}{wt}
\DeclareMathOperator{\dep}{dep}
\DeclareMathOperator{\htt}{ht}
\DeclareMathOperator{\Surj}{Surj}
\newcommand{\PPP}{\mathcal{P}}
\newcommand{\Q}{\mathbb{Q}}
\newcommand{\R}{\mathbb{R}}
\newcommand{\SSS}{\mathfrak{S}}
\newcommand{\Z}{\mathbb{Z}}
\newcommand{\ZZ}{\mathcal{Z}}
\title[Interpolated polynomial MZVs of fixed wt, dep, and ht]{Interpolated polynomial multiple zeta values of fixed weight, depth, and height}
\dedicatory{Dedicated to Professor David Preiss on the occasion of his 75th birthday.}
\author[M. Hirose]{Minoru Hirose}
\address[M. Hirose]{Institute for Advanced Research, Nagoya University,  Furo-cho, Chikusa-ku, Nagoya, 464-8602, Japan}
\email{minoru.hirose@math.nagoya-u.ac.jp}
\author[H. Murahara]{Hideki Murahara}
\address[H. Murahara]{The University of Kitakyushu,  4-2-1 Kitagata, Kokuraminami-ku, Kitakyushu, Fukuoka, 802-8577, Japan}
\email{hmurahara@mathformula.page}
\author[S. Saito]{Shingo Saito}
\address[S. Saito]{Faculty of Arts and Science, Kyushu University,  744, Motooka, Nishi-ku, Fukuoka, 819-0395, Japan}
\email{ssaito@artsci.kyushu-u.ac.jp}
\keywords{Multiple zeta(-star) values, Symmetric multiple zeta(-star) values, Polynomial multiple zeta(-star) values.}
\subjclass[2010]{Primary 11M32}
\begin{document}
\begin{abstract}
 We define the interpolated polynomial multiple zeta values
 as a generalization of all of multiple zeta values, multiple zeta-star values,
 interpolated multiple zeta values, symmetric multiple zeta values, and polynomial multiple zeta values.
 We then compute the generating function of the sum of interpolated polynomial multiple zeta values of fixed weight, depth, and height.
\end{abstract}

\maketitle

%%%%%%%%%%%%%%%%%%%%%%%%%%%%%%%%%%%%%%%%%%%%%%%%%%%%%%%%%%%%%%%%%%%%%%%%%%%%%%%%

\section{Introduction}
\subsection{Multiple zeta values}
An \emph{index} is a finite (possibly empty) sequence of positive integers.
We say that an index is \emph{admissible} if either it is empty or its last component is greater than $1$.

For each admissible index $\boldsymbol{k}=(k_1,\dots,k_r)$,
we define the \emph{multiple zeta value} and \emph{multiple zeta-star value} by
\begin{align*}
 \zeta(\boldsymbol{k})&=\sum_{1\le n_1<\dots<n_r}\frac{1}{n_1^{k_1}\dotsm n_r^{k_r}}\in\R,\\
 \zeta^{\star}(\boldsymbol{k})&=\sum_{1\le n_1\le\dots\le n_r}\frac{1}{n_1^{k_1}\dotsm n_r^{k_r}}\in\R,
\end{align*}
respectively (note that the admissibility of $\boldsymbol{k}$ ensures the convergence of the infinite sums).
Let $\ZZ$ denote the $\Q$-vector space spanned by all the multiple zeta values (or equivalently by all the multiple zeta-star values).
It is known that the multiple zeta(-star) values satisfy a large number of linear and algebraic relations,
and our central interest lies in investigating such relations.

\subsection{Analogues and generalizations of multiple zeta values}
Following Yamamoto~\cite{Yam13}, we define for each admissible index $\boldsymbol{k}=(k_1,\dots,k_r)$ the \emph{interpolated multiple zeta value} by
\[
 \zeta^t(\boldsymbol{k})=\sum_{1\le n_1\le\dots\le n_r}\frac{t^{e(n_1,\dots,n_r)}}{n_1^{k_1}\dotsm n_r^{k_r}}\in\ZZ[t],
\]
where $e(n_1,\dots,n_r)$ denotes the number of $i=1,\dots,r-1$ for which $n_i=n_{i+1}$.
Note that $\zeta^0(\boldsymbol{k})=\zeta(\boldsymbol{k})$ and $\zeta^1(\boldsymbol{k})=\zeta^{\star}(\boldsymbol{k})$.

Following Kaneko and Zagier~\cite{KZ19},
we define for each index $\boldsymbol{k}=(k_1,\dots,k_r)$ the \emph{symmetric multiple zeta value} and \emph{symmetric multiple zeta-star value} by
\begin{align*}
 \zeta_S(\boldsymbol{k})&=\sum_{i=0}^{r}(-1)^{k_{i+1}+\dots+k_r}\zeta(k_1,\dots,k_i)\zeta(k_r,\dots,k_{i+1})\in\ZZ,\\
 \zeta_S^{\star}(\boldsymbol{k})&=\sum_{i=0}^{r}(-1)^{k_{i+1}+\dots+k_r}\zeta^{\star}(k_1,\dots,k_i)\zeta^{\star}(k_r,\dots,k_{i+1})\in\ZZ,
\end{align*}
where in the right-hand side we extend $\zeta$ and $\zeta^{\star}$ to non-admissible indices by using the harmonic regularization (see \cite{IKZ06} for further details),
so that $\boldsymbol{k}$ need not be admissible.
Note that some papers use the term ``the symmetric multiple zeta(-star) values'' to mean $\zeta_S^{(\star)}(\boldsymbol{k})\bmod\zeta(2)\ZZ$ in $\ZZ/\zeta(2)\ZZ$,
as opposed to our terminology.

Following our previous works \cite{HMS20} and \cite{HMS21},
we define for each index $\boldsymbol{k}=(k_1,\dots,k_r)$ the \emph{polynomial multiple zeta value} and \emph{polynomial multiple zeta-star value} by
\begin{align*}
 \zeta_{x,y}(\boldsymbol{k})&=\sum_{i=0}^{r}\zeta(k_1,\dots,k_i)\zeta(k_r,\dots,k_{i+1})x^{k_1+\dots+k_i}y^{k_{i+1}+\dots+k_r}\in\ZZ[x,y],\\
 \zeta_{x,y}^{\star}(\boldsymbol{k})&=\sum_{i=0}^{r}\zeta^{\star}(k_1,\dots,k_i)\zeta^{\star}(k_r,\dots,k_{i+1})x^{k_1+\dots+k_i}y^{k_{i+1}+\dots+k_r}\in\ZZ[x,y].
\end{align*}
Note that $\zeta_{1,0}(\boldsymbol{k})=\zeta(\boldsymbol{k})$, $\zeta_{1,0}^{\star}(\boldsymbol{k})=\zeta^{\star}(\boldsymbol{k})$,
$\zeta_{1,-1}(\boldsymbol{k})=\zeta_S(\boldsymbol{k})$, and $\zeta_{1,-1}^{\star}(\boldsymbol{k})=\zeta_S^{\star}(\boldsymbol{k})$.

Finally we define for each index $\boldsymbol{k}=(k_1,\dots,k_r)$ the \emph{interpolated polynomial multiple zeta value} by
\[
 \zeta_{x,y}^t(\boldsymbol{k})=\sum_{i=0}^{r}\zeta^t(k_1,\dots,k_i)\zeta^t(k_r,\dots,k_{i+1})x^{k_1+\dots+k_i}y^{k_{i+1}+\dots+k_r}\in\ZZ[t,x,y].
\]
Note that $\zeta_{x,y}^0(\boldsymbol{k})=\zeta_{x,y}(\boldsymbol{k})$, $\zeta_{x,y}^1(\boldsymbol{k})=\zeta_{x,y}^{\star}(\boldsymbol{k})$,
and $\zeta_{1,0}^t(\boldsymbol{k})=\zeta^t(\boldsymbol{k})$,
so that the interpolated polynomial multiple zeta values generalize all values defined above.

\subsection{Ohno-Zagier theorem and its generalization}
For each index $\boldsymbol{k}=(k_1,\dots,k_r)$,
we define its \emph{weight} $\wt(\boldsymbol{k})$, \emph{depth} $\dep(\boldsymbol{k})$, and \emph{height} $\htt(\boldsymbol{k})$ by
\[
 \wt(\boldsymbol{k})=k_1+\dots+k_r,\quad
 \dep(\boldsymbol{k})=r,\quad
 \htt(\boldsymbol{k})=\#\{i=1,\dots,r\mid k_i\ge2\},
\]
and we put
\[
 u_{\boldsymbol{k}}=X^{\wt(\boldsymbol{k})-\dep(\boldsymbol{k})-\htt(\boldsymbol{k})}Y^{\dep(\boldsymbol{k})-\htt(\boldsymbol{k})}Z^{\htt(\boldsymbol{k})}
 \in\Q[X,Y,Z].
\]
Note that writing $u_k=u_{(k)}$ for every positive integer $k$, we have $u_{(k_1,\dots,k_r)}=u_{k_1}\dotsm u_{k_r}$ for every index $(k_1,\dots,k_r)$.

Let $I$ denote the set of all nonempty admissible indices.
Ohno and Zagier~\cite{OZ01} computed the generating function
\[
 \Phi(X,Y,Z)=\sum_{\boldsymbol{k}\in I}\zeta(\boldsymbol{k})u_{\boldsymbol{k}}\in\ZZ[[X,Y,Z]]
\]
and obtained the following formula:
\begin{theorem}[Ohno and Zagier \cite{OZ01}]\label{theorem:OZ}
 We have 
 \begin{align*}
  \Phi(X,Y,Z)
  &=\frac{Z}{XY-Z}\biggl(1-\frac{\Gamma(1-X)\Gamma(1-Y)}{\Gamma(1-\alpha)\Gamma(1-\beta)}\biggr)\\
  &=\frac{Z}{XY-Z}\Biggl(1-\exp\Biggl(\sum_{k=2}^{\infty}\frac{\zeta(k)}{k}(X^k+Y^k-\alpha^k-\beta^k)\Biggr)\Biggr),
 \end{align*}
 where $\alpha$ and $\beta$ satisfy $\alpha+\beta=X+Y$ and $\alpha\beta=Z$.
\end{theorem}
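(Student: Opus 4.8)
The plan is to pass to the iterated-integral (multiple polylogarithm) representation, convert the whole problem into a single scalar ordinary differential equation of hypergeometric type, and evaluate the resulting solution at $t=1$ by Gauss's summation theorem. First I would encode each admissible index $\boldsymbol{k}=(k_1,\dots,k_r)$ by the word $w(\boldsymbol{k})=e_1e_0^{k_1-1}\cdots e_1e_0^{k_r-1}$ in two letters $e_0,e_1$, so that $\zeta(\boldsymbol{k})=\int_0^1 w$ under the dictionary $e_0\mapsto dt/t$, $e_1\mapsto dt/(1-t)$. Reading off statistics, the weight is the length of $w$, the depth is the number of $e_1$'s, and the height is the number of factors $e_1e_0$; consequently $u_{\boldsymbol{k}}$ factors as a product over consecutive letter-pairs, with $e_1e_0\mapsto Z$, $e_1e_1\mapsto Y$, $e_0e_0\mapsto X$, and $e_0e_1\mapsto1$. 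Writing $L_w(t)=\int_0^t w$ for the one-variable polylogarithm (so $L_w(0)=0$ and $L_\emptyset=1$) and $c(w)$ for the above pair-weight, I introduce $A(t)=\sum_w c(w)L_w(t)$ summed over words ending in $e_0$ and $B(t)=\sum_w c(w)L_w(t)$ summed over words ending in $e_1$, all beginning with $e_1$. Since the admissible words are exactly those beginning with $e_1$ and ending with $e_0$, the target is $\Phi(X,Y,Z)=A(1)$.

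The second step is to differentiate. Using $\frac{d}{dt}L_w(t)=\omega_{\epsilon}(t)\,L_{w^-}(t)$, where $w^-$ drops the final letter $\epsilon$, and recording the weight of the newly completed pair, I obtain the coupled system
\[ tA'=XA+ZB, \qquad (1-t)B'=A+YB+1, \]
with $A(0)=B(0)=0$ (the summand $1$ comes from the word $e_1$). Eliminating $B=(tA'-XA)/Z$ yields the inhomogeneous Gauss hypergeometric equation
\[ t(1-t)A''+\bigl[(1-X)-(1-X+Y)t\bigr]A'+(XY-Z)A=Z, \]
whose homogeneous part has parameters $c=1-X$, $a+b=Y-X$, and $ab=Z-XY$; one checks that $a=\beta-X$ and $b=Y-\beta$ are admissible choices of roots, where $\alpha,\beta$ satisfy $\alpha+\beta=X+Y$ and $\alpha\beta=Z$ as in the statement.

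Finally I would solve this equation. The constant $Z/(XY-Z)$ is a particular solution, and the unique solution holomorphic at $t=0$ with $A(0)=0$ is
\[ A(t)=\frac{Z}{XY-Z}\bigl(1-{}_2F_1(a,b;c;t)\bigr), \]
the second, non-analytic homogeneous solution $\sim t^{X}$ being excluded because $A$ is a power series in $t$ with integer exponents. Setting $t=1$ and invoking $ {}_2F_1(a,b;c;1)=\Gamma(c)\Gamma(c-a-b)/\bigl(\Gamma(c-a)\Gamma(c-b)\bigr)$ together with $c-a-b=1-Y$, $c-a=1-\beta$, and $c-b=1-\alpha$ produces the first displayed formula; the second follows from the Weierstrass product $\log\Gamma(1-x)=\gamma x+\sum_{k\ge2}\zeta(k)x^k/k$, in which the $\gamma$-terms cancel because $\alpha+\beta=X+Y$.

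The main obstacle I anticipate lies in the first two steps: arranging the pair-weighting so that the height is tracked correctly and the system closes on exactly the two functions $A,B$. A naive per-letter weighting fails, since whether a given $e_1$ contributes a factor $Y$ or instead belongs to a factor $e_1e_0$ (contributing $Z$) depends on its successor, so the bookkeeping must be organized by transitions rather than by single letters. Two further points of rigor are the justification of evaluating the hypergeometric series at the boundary $t=1$ as a formal identity in $X,Y,Z$ over $\ZZ$, and the observation that, although $Z/(XY-Z)$ is not itself a power series, its product with $1-{}_2F_1(a,b;c;t)$ has a removable singularity along $XY=Z$ and defines a genuine element of $\ZZ[[X,Y,Z]]$.
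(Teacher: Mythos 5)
The paper itself contains no proof of this theorem: it is quoted as a known result from Ohno and Zagier \cite{OZ01} and only used later to derive the corollaries, so there is no internal argument to compare against. Your proposal is correct, and it is in essence the original Ohno--Zagier proof: word encoding of admissible indices, transition (pair) weights to track weight, depth, and height simultaneously, the coupled system $tA'=XA+ZB$, $(1-t)B'=A+YB+1$ for the generating functions of one-variable multiple polylogarithms, elimination of $B$ to reach an inhomogeneous Gauss hypergeometric equation, and evaluation at $t=1$ via Gauss's summation theorem, with the second displayed form coming from $\log\Gamma(1-x)=\gamma x+\sum_{k\ge2}\zeta(k)x^k/k$ and the cancellation of the $\gamma$-terms by $\alpha+\beta=X+Y$. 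Your bookkeeping is right: for an admissible word (beginning with $e_1$, ending with $e_0$) of weight $w$, depth $d$, height $h$, the pair counts are $h$ occurrences of $e_1e_0$, $d-h$ of $e_1e_1$, $h-1$ of $e_0e_1$, and $w-d-h$ of $e_0e_0$, so the product of your pair weights is exactly $u_{\boldsymbol{k}}$; the elimination indeed gives $t(1-t)A''+[(1-X)-(1-X+Y)t]A'+(XY-Z)A=Z$ with parameters $c=1-X$, $a=\beta-X$, $b=Y-\beta$, and then $c-a-b=1-Y$, $c-a=1-\beta$, $c-b=1-\alpha$ as you state. The rigor points you flag are the right ones and are all standard: exclusion of the $t^{X}$-branch because $A$ is coefficientwise analytic at $t=0$, Abel-type passage to the boundary $t=1$, and the removability of the singularity along $XY=Z$, which follows because every coefficient of ${}_2F_1(a,b;c;t)-1$ carries the factor $ab=Z-XY$.
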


Aoki, Kombu, and Ohno~\cite{AKO08} obtained an analogous theorem by computing the generating function
\[
 \Phi^{\star}(X,Y,Z)=\sum_{\boldsymbol{k}\in I}(-1)^{\dep(\boldsymbol{k})}\zeta^{\star}(\boldsymbol{k})u_{\boldsymbol{k}}\in\ZZ[[X,Y,Z]].
\]
Furthermore, Li and Qun~\cite{LQ17} computed the generating function
\[
 \Phi^t(X,Y,Z)=\sum_{\boldsymbol{k}\in I}(1-2t)^{\dep(\boldsymbol{k})}\zeta^t(\boldsymbol{k})u_{\boldsymbol{k}}\in\ZZ[t][[X,Y,Z]]
\]
and obtained the following formula:
\begin{theorem}[Li and Qun~\cite{LQ17}]\label{theorem:LQ}
 We have
 \[
  \Phi^t(X,Y,Z)=\frac{Z}{(1-Y)(1-\beta_t)}{}_3F_2\biggl(\begin{matrix}1+\alpha_{t-1}-\beta_t,1+\beta_{t-1}-\beta_t,1\\2-Y,2-\beta_t\end{matrix};1\biggr),
 \]
 where $\alpha_s$ and $\beta_s$ satisfy $\alpha_s+\beta_s=X+sY$ and $\alpha_s\beta_s=s(XY-Z)$.
\end{theorem}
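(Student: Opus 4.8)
The plan is to evaluate $\Phi^t$ by manipulating the defining series directly, in the spirit of the Ohno--Zagier and Aoki--Kombu--Ohno arguments, collapsing the triple sum to a single hypergeometric series. First I would insert the series definition of $\zeta^t$ into $\Phi^t$ and interchange the order of summation. Since $u_{\boldsymbol{k}}=u_{k_1}\cdots u_{k_r}$ and the interpolation weight $t^{e(n_1,\dots,n_r)}$ depends only on the summation variables, the sum over each component $k_i$ decouples: an interior component yields $g(n)=\sum_{k\ge1}u_k n^{-k}=\frac{Y}{n}+\frac{Z}{n(n-X)}$, while the admissible final component yields $h(n)=\sum_{k\ge2}u_k n^{-k}=\frac{Z}{n(n-X)}$. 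This gives
\[
 \Phi^t=\sum_{r\ge1}(1-2t)^r\sum_{1\le n_1\le\cdots\le n_r}t^{e(n_1,\dots,n_r)}\,g(n_1)\cdots g(n_{r-1})\,h(n_r).
\]

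Next I would collapse the ordered sum. Grouping $n_1\le\cdots\le n_r$ according to their distinct values $m_1<\cdots<m_s$ with multiplicities $a_1,\dots,a_s$, the weight factors as $(1-2t)^{\sum a_j}t^{\sum(a_j-1)}$, and summing the resulting geometric series over each multiplicity produces
\[
 \Phi^t=\sum_{N\ge1}H(N)\prod_{m=1}^{N-1}\bigl(1+G(m)\bigr),\qquad G(n)=\frac{(1-2t)g(n)}{1-(1-2t)t\,g(n)},\quad H(n)=\frac{(1-2t)h(n)}{1-(1-2t)t\,g(n)}.
\]
The key observation is that, after clearing the factor $m(m-X)$, both $1+G(m)$ and the common denominator $1-(1-2t)t\,g(m)$ become monic quadratics in $m$ of the shape $m^2-(X+sY)m+s(XY-Z)$; their roots are exactly the pairs $\alpha_s,\beta_s$, with $s=(1-2t)t$ for the denominator and $s=(1-2t)(t-1)$ for the numerator. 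Consequently each product $\prod_{m=1}^{N-1}(1+G(m))$ telescopes into a ratio of Pochhammer symbols, and the sum over $N$ assembles into a ${}_3F_2$ evaluated at $1$, yielding a closed expression for $\Phi^t$ (understood as an element of $\ZZ[t][[X,Y,Z]]$).

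The remaining and most delicate step is to match this ${}_3F_2(1)$ with the form in the statement. The bookkeeping above naturally produces a series whose parameters carry the subscripts $(1-2t)t$ and $(1-2t)(t-1)$, whereas the target has subscripts $t$ and $t-1$; since these differ by the factor $(1-2t)$, the identification cannot be merely cosmetic. I expect the bridge to be a Thomae-type transformation of the non-terminating ${}_3F_2(1)$, under which the new numerator parameters arise as the combinations $1+\alpha_{t-1}-\beta_t$ and $1+\beta_{t-1}-\beta_t$, the denominator parameters become $2-Y$ and $2-\beta_t$, and the prefactor becomes $\frac{Z}{(1-Y)(1-\beta_t)}$; this also forces a definite labeling of the two roots (a fixed branch for $\beta_t$). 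Pinning down the correct element of the Thomae orbit is the crux of the argument. The specializations $t=0$ and $t=1$, which must recover Theorem~\ref{theorem:OZ} and the Aoki--Kombu--Ohno formula respectively, together with the vanishing $\Phi^{1/2}=0$ forced by the factor $(1-2t)^{\dep}$, provide convenient consistency checks on the whole computation.
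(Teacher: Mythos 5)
Your reduction is essentially correct, and it should be said up front that the paper itself contains no proof of Theorem~\ref{theorem:LQ}: the result is quoted from \cite{LQ17} and used only to make Theorem~\ref{theorem:main1} explicit, so your argument has to stand on its own. Its core does. The decoupling into $g(n)=\frac{Y}{n}+\frac{Z}{n(n-X)}$ and $h(n)=\frac{Z}{n(n-X)}$, the collapse over multiplicities to $\Phi^t=\sum_{N\ge1}H(N)\prod_{m=1}^{N-1}(1+G(m))$, and the identification of the two monic quadratics are all right; in the notation of Theorem~\ref{theorem:main1} the denominator roots are $\gamma_t,\delta_t$ (your $s=t(1-2t)$) and the numerator roots are $\gamma_{1-t},\delta_{1-t}$ (your $s=(1-2t)(t-1)=(1-t)(2t-1)$), so the Pochhammer telescoping yields
\begin{equation*}
\Phi^t(X,Y,Z)=\frac{(1-2t)Z}{(1-\gamma_t)(1-\delta_t)}\,{}_3F_2\biggl(\begin{matrix}1-\gamma_{1-t},1-\delta_{1-t},1\\2-\gamma_t,2-\delta_t\end{matrix};1\biggr),
\end{equation*}
which passes the obvious checks (the coefficient of $Z$ is $(1-2t)\zeta(2)$, the coefficient of $YZ$ at $t=0$ is $\zeta(1,2)$, and the whole series vanishes at $t=1/2$, as it must).

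The gap is in your last step, and your instinct that the subscript mismatch is ``not merely cosmetic'' is pointing at something real, but the fix is not a search through the Thomae orbit. The missing observation is that the twist $(1-2t)^{\dep(\boldsymbol{k})}$ in the definition of $\Phi^t$ is exactly the substitution $(Y,Z)\mapsto((1-2t)Y,(1-2t)Z)$ applied to the untwisted series $\sum_{\boldsymbol{k}\in I}\zeta^t(\boldsymbol{k})u_{\boldsymbol{k}}$, and under this substitution $\alpha_s,\beta_s$ become $\alpha_{(1-2t)s},\beta_{(1-2t)s}$; thus your subscripts $t(1-2t)$ and $(t-1)(1-2t)$ are precisely Li--Qun's $t$ and $t-1$ written in the twisted variables. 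Granting this, a single unit-parameter Thomae relation, ${}_3F_2(1,b,c;d,e;1)=\frac{e-1}{s}\,{}_3F_2(1,d-b,d-c;d,s+1;1)$ with $s=d+e-1-b-c$ (here $s=1-(1-2t)Y$), converts your symmetric expression into
\begin{equation*}
\frac{(1-2t)Z}{\bigl(1-(1-2t)Y\bigr)(1-\delta_t)}\,{}_3F_2\biggl(\begin{matrix}1+\gamma_{1-t}-\delta_t,1+\delta_{1-t}-\delta_t,1\\2-(1-2t)Y,2-\delta_t\end{matrix};1\biggr),
\end{equation*}
which is the right-hand side of Theorem~\ref{theorem:LQ} with $(Y,Z)$ replaced by $((1-2t)Y,(1-2t)Z)$ and $(\alpha_{t-1},\beta_{t-1},\beta_t)$ by $(\gamma_{1-t},\delta_{1-t},\delta_t)$. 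Be aware that this reinterpretation is unavoidable: the theorem read literally, with $\alpha_s,\beta_s$ defined through the paper's own $X,Y,Z$ and with the paper's twisted $\Phi^t$, is not equal to $\Phi^t$ as a power series---your own proposed consistency checks detect this, since at $t=1/2$ the left-hand side is identically $0$ while the printed right-hand side is not, and at $t=1$ the printed right-hand side has $Z$-coefficient $+\zeta(2)$ rather than $-\zeta^{\star}(2)$. So no Thomae transformation alone can bridge your (correct) formula to the statement as literally printed; what your argument proves, once completed by the substitution remark and the single Thomae step above, is the corrected form of the quotation, which is what \cite{LQ17} actually asserts.
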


\subsection{Statement of our main theorem}
In this paper, we compute the generating function
\[
 \Phi_{x,y}^t(X,Y,Z)=\sum_{\boldsymbol{k}\in I}(1-2t)^{\dep(\boldsymbol{k})}\zeta_{x,y}^t(\boldsymbol{k})u_{\boldsymbol{k}}.
\]
and prove the following theorem:
\begin{theorem}[Main theorem]\label{theorem:main1}
 We have
 \begin{align*}
  &\Phi_{x,y}^t(X,Y,Z)\\
  &=\Phi^t(xX,xY,x^2Z)\\
  &\phantom{{}={}}-\Phi^{1-t}(yX,yY,y^2Z)\exp\Biggl(\sum_{k=2}^{\infty}\frac{\zeta(k)}{k}(x^k+y^k)(\gamma_t^k+\delta_t^k-\gamma_{1-t}^k-\delta_{1-t}^k)\Biggr),
 \end{align*}
 where $\gamma_t=\alpha_{t(1-2t)}$ and $\delta_t=\beta_{t(1-2t)}$.
\end{theorem}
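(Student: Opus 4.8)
The plan is to expand the definition of $\zeta_{x,y}^t(\boldsymbol{k})$ inside $\Phi_{x,y}^t$ and interchange the order of summation, writing $\boldsymbol{k}=\boldsymbol{a}\cdot\boldsymbol{b}$ for the splitting at index $i$, so that $\boldsymbol{a}=(k_1,\dots,k_i)$ and $\boldsymbol{b}=(k_{i+1},\dots,k_r)$. Since $\wt$, $\dep$, and $\htt$ are additive under concatenation and invariant under reversal, the weight $u_{\boldsymbol{k}}=u_{\boldsymbol{a}}u_{\boldsymbol{b}}$, the factor $(1-2t)^{\dep\boldsymbol{k}}$, and the monomials $x^{\wt\boldsymbol{a}}y^{\wt\boldsymbol{b}}$ all factor through the splitting, and $u_{\boldsymbol{b}}=u_{\boldsymbol{b}^{\mathrm{rev}}}$. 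Because $u_{\boldsymbol{a}}$ is homogeneous of weight $\wt\boldsymbol{a}$ for the grading $\deg X=\deg Y=1$, $\deg Z=2$, the substitution $(X,Y,Z)\mapsto(xX,xY,x^2Z)$ multiplies $u_{\boldsymbol{a}}$ by $x^{\wt\boldsymbol{a}}$. As admissibility of $\boldsymbol{k}$ concerns only its last entry, the term $\boldsymbol{b}=\emptyset$ contributes exactly $\Phi^t(xX,xY,x^2Z)$, while for $\boldsymbol{b}\neq\emptyset$ admissibility of $\boldsymbol{k}$ is equivalent to admissibility of $\boldsymbol{b}$ and puts no constraint on $\boldsymbol{a}$. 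Hence
\[ \Phi_{x,y}^t = \Phi^t(xX,xY,x^2Z) + \mathcal{A}\cdot\mathcal{B}, \]
where $\mathcal{A}=\sum_{\boldsymbol{a}}(1-2t)^{\dep\boldsymbol{a}}\zeta^t(\boldsymbol{a})x^{\wt\boldsymbol{a}}u_{\boldsymbol{a}}$ runs over all indices $\boldsymbol{a}$ (the non-admissible ones taken with harmonic regularization) and $\mathcal{B}=\sum_{\boldsymbol{b}\in I}(1-2t)^{\dep\boldsymbol{b}}\zeta^t(\boldsymbol{b}^{\mathrm{rev}})y^{\wt\boldsymbol{b}}u_{\boldsymbol{b}}$.

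It then remains to show that $\mathcal{A}\cdot\mathcal{B}$ equals the second term of the theorem. The algebraic engine is a reversal identity for the interpolated values: for every index $\boldsymbol{c}$ of depth $\ge 1$,
\[ \sum_{\boldsymbol{c}=\boldsymbol{u}\cdot\boldsymbol{v}}(-1)^{\dep\boldsymbol{v}}\zeta^t(\boldsymbol{u})\zeta^{1-t}(\boldsymbol{v}^{\mathrm{rev}}) = 0, \]
which I would prove by expanding $\zeta^t$ as a sum over mergings of adjacent entries and invoking the stuffle antipode relation for $\zeta$, the passage $t\to 1-t$ in the second factor absorbing the merging weights (as one checks directly in depths one and two, e.g.\ $\zeta^t(c_1,c_2)+\zeta^{1-t}(c_2,c_1)=\zeta(c_1)\zeta(c_2)$). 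Since concatenation-multiplicativity of the weight turns a deconcatenation sum into an ordinary product, this identity says at the level of generating functions that $\mathcal{A}^t$ and $\mathcal{A}^{1-t}$ are mutually inverse; this is the mechanism by which $1-t$, together with the sign swapping $(1-2t)\mapsto-(1-2t)$, i.e.\ $\Phi^t\mapsto\Phi^{1-t}$, enters the answer.

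Next I would evaluate the two factors. For $\mathcal{A}$ I expect a completed Ohno--Zagier phenomenon: summing over all indices of fixed weight, depth, and height rather than only admissible ones removes the rational prefactor $Z/(XY-Z)$ and collapses the hypergeometric sum to a pure exponential,
\[ \mathcal{A} = \exp\Biggl(\sum_{k=2}^{\infty}\frac{\zeta(k)}{k}x^k(\gamma_t^k+\delta_t^k-\gamma_{1-t}^k-\delta_{1-t}^k)\Biggr), \]
whose exponent is antisymmetric under $t\leftrightarrow 1-t$, consistently with $\mathcal{A}^t\mathcal{A}^{1-t}=1$; a weight-$2$ check against $\zeta^t_{\mathrm{reg}}(1,1)=(t-\tfrac12)\zeta(2)$ confirms the formula. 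For $\mathcal{B}$ I would reverse the summation index and apply the reversal identity to trade $\zeta^t(\boldsymbol{b}^{\mathrm{rev}})$ for $\zeta^{1-t}$ of admissible indices; resumming, the admissible part produces $-\Phi^{1-t}(yX,yY,y^2Z)$ via Li--Qun's Theorem~\ref{theorem:LQ}, and the correction terms exponentiate into the same Gamma-type factor as for $\mathcal{A}$ but in the variable $y$. Multiplying $\mathcal{A}\cdot\mathcal{B}$ and using $\exp(\cdots x^k)\exp(\cdots y^k)=\exp(\cdots(x^k+y^k))$ gives the stated formula.

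The main obstacle is the regularization analysis underlying these two evaluations: one must control the harmonic regularization of the leading and trailing $1$'s at the splitting point and show that it contributes precisely the exponential with arguments $\gamma_t,\delta_t,\gamma_{1-t},\delta_{1-t}$. Concretely this means identifying the parameter $s=t(1-2t)$ in Li--Qun's $(\alpha_s,\beta_s)$, recording $\gamma_t+\delta_t=X+t(1-2t)Y$ and $\gamma_t\delta_t=t(1-2t)(XY-Z)$ together with $s-s'=1-2t$ and $s+s'=-(1-2t)^2$ for $s'=(1-t)(2t-1)$, and verifying that the ${}_3F_2$ of Theorem~\ref{theorem:LQ} degenerates to the claimed exponential once the non-admissible indices are restored. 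This degeneration, rather than the combinatorial bookkeeping of the splitting, is where the real work lies.
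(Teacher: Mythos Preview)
Your decomposition via splitting and the antipode identity reproduces the paper's Lemma~2.1 exactly: writing $\Phi_{x,y}^t=\Phi^t(xX,xY,x^2Z)+\mathcal{A}\cdot\mathcal{B}$ and applying the reversal identity (the paper's Lemma~2.3, taken from Yamamoto) yields $\mathcal{B}=-\mathcal{A}_y\,\Phi^{1-t}(yX,yY,y^2Z)$, so that the problem reduces to evaluating the unconstrained sum $\mathcal{A}_x\mathcal{A}_y=\sum_{\boldsymbol{k}}(1-2t)^{\dep\boldsymbol{k}}\zeta_{x,y}^t(\boldsymbol{k})u_{\boldsymbol{k}}$ over \emph{all} indices. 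Up to this point you and the paper agree.

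The gap is the evaluation of $\mathcal{A}$. You propose a ``hypergeometric degeneration'' of Li--Qin's ${}_3F_2$ once the non-admissible indices are restored, but you give no mechanism beyond a weight-$2$ sanity check, and you yourself flag this as the unresolved hard part. The paper does not go near the ${}_3F_2$ here. Instead it invokes Hoffman's quasi-shuffle identity
\[
\sum_{\sigma\in\SSS_r}\zeta_{x,y}^t(k_{\sigma(1)},\dots,k_{\sigma(r)})=\sum_{\Pi\in\PPP_r}\prod_{P\in\Pi}c_{\lvert P\rvert}(t)\,\zeta_{x,y}\Bigl(\sum_{i\in P}k_i\Bigr),\qquad c_r(t)=(r-1)!\bigl(t^r-(t-1)^r\bigr),
\]
valid because $\zeta_{x,y}^t$ obeys the $t$-harmonic product. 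Since $u_{(k_1,\dots,k_r)}=u_{k_1}\cdots u_{k_r}$ is symmetric in the $k_i$, the unconstrained sum can be symmetrized at no cost; Hoffman's formula then expresses it purely in terms of the depth-one values $\zeta_{x,y}(k)=(x^k+y^k)\zeta(k)$, and an elementary logarithmic generating-function computation (factoring a quadratic in $T$ with roots $\gamma_t^{-1},\delta_t^{-1}$) resums these into $\exp\sum_{k\ge 2}\frac{\zeta(k)}{k}(x^k+y^k)(\gamma_t^k+\delta_t^k-\gamma_{1-t}^k-\delta_{1-t}^k)$. This symmetric-group averaging, not any analytic manipulation of $\Phi^t$ or of harmonically regularized tails, is the missing idea in your proposal.
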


With the aid of Theorem~\ref{theorem:LQ}, our main theorem allows us to compute $\Phi_{x,y}^t(X,Y,Z)$ completely explicity.

\section{Proof of our main theorem}
In this section, we give a proof of our main theorem (Theorem~\ref{theorem:main1})
by showing the following two lemmas:
\begin{lemma}\label{lemma:Phi^t-Phi_x,y^t}
 We have
 \begin{align*}
  &\Phi^t(xX,xY,x^2Z)-\Phi_{x,y}^{t}(X,Y,Z)\\
  &=\Phi^{1-t}(yX,yY,y^2Z)\sum_{\boldsymbol{k}}(1-2t)^{\dep(\boldsymbol{k})}\zeta_{x,y}^{t}(\boldsymbol{k})u_{\boldsymbol{k}}.
 \end{align*}
\end{lemma}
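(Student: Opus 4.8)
The plan is to substitute the definition of $\zeta_{x,y}^t$ into $\Phi_{x,y}^t$ and to reorganize the resulting double sum into a product. First I would record the elementary scaling identity $u_{\boldsymbol{k}}(xX,xY,x^2Z)=x^{\wt(\boldsymbol{k})}u_{\boldsymbol{k}}$: since $u_{\boldsymbol{k}}=X^{\wt(\boldsymbol{k})-\dep(\boldsymbol{k})-\htt(\boldsymbol{k})}Y^{\dep(\boldsymbol{k})-\htt(\boldsymbol{k})}Z^{\htt(\boldsymbol{k})}$, the substitution introduces a factor $x$ from each of $X,Y$ and $x^2$ from $Z$, and the total power of $x$ is $(\wt-\dep-\htt)+(\dep-\htt)+2\htt=\wt(\boldsymbol{k})$. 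Hence $\Phi^t(xX,xY,x^2Z)=\sum_{\boldsymbol{k}\in I}(1-2t)^{\dep(\boldsymbol{k})}\zeta^t(\boldsymbol{k})x^{\wt(\boldsymbol{k})}u_{\boldsymbol{k}}$, and the analogous formula holds for $\Phi^{1-t}(yX,yY,y^2Z)$ with $y$ and $1-t$.

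Writing each $\boldsymbol{k}=(k_1,\dots,k_r)\in I$ as the concatenation of a prefix $\boldsymbol{a}=(k_1,\dots,k_i)$ and a suffix $\boldsymbol{b}=(k_{i+1},\dots,k_r)$, I would use the multiplicativity $u_{\boldsymbol{k}}=u_{\boldsymbol{a}}u_{\boldsymbol{b}}$, the additivity of $\wt$ and $\dep$, and the invariance of $u$, $\wt$, $\dep$ under reversal (so that $u_{\boldsymbol{b}}=u_{\overline{\boldsymbol{b}}}$) to factor the summand at the split point $i$ into a prefix part carrying $x^{\wt(\boldsymbol{a})}$ and $\zeta^t(\boldsymbol{a})$ times a suffix part carrying $y^{\wt(\boldsymbol{b})}$ and $\zeta^t(\overline{\boldsymbol{b}})$. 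Swapping the order of summation converts the sum over $(\boldsymbol{k},i)$ into a sum over all pairs $(\boldsymbol{a},\boldsymbol{b})$ with $\boldsymbol{a}\boldsymbol{b}\in I$. The key combinatorial point is that admissibility of the concatenation splits cleanly according to whether $\boldsymbol{b}$ is empty: the terms with $\boldsymbol{b}=\emptyset$ require $\boldsymbol{a}\in I$ and reassemble into $\Phi^t(xX,xY,x^2Z)$, while the terms with $\boldsymbol{b}\neq\emptyset$ have their admissibility controlled solely by $\boldsymbol{b}$ (whose last entry is the last entry of $\boldsymbol{k}$), so there the constraint is $\boldsymbol{b}\in I$ with $\boldsymbol{a}$ unrestricted.

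Consequently the $\boldsymbol{b}\neq\emptyset$ contribution factors as the product of the generating series over arbitrary prefixes, which is exactly the series $\sum_{\boldsymbol{k}}(1-2t)^{\dep(\boldsymbol{k})}\zeta_{x,y}^t(\boldsymbol{k})u_{\boldsymbol{k}}$ displayed on the right-hand side, with the suffix series $\sum_{\boldsymbol{b}\in I}(1-2t)^{\dep(\boldsymbol{b})}\zeta^t(\overline{\boldsymbol{b}})y^{\wt(\boldsymbol{b})}u_{\boldsymbol{b}}$. To finish, I would identify this suffix series with $-\Phi^{1-t}(yX,yY,y^2Z)$; the two resulting minus signs then combine to produce the positive product in the statement. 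After the reindexing $\boldsymbol{c}=\overline{\boldsymbol{b}}$, and using $1-2(1-t)=-(1-2t)$, this identification is precisely a reversal relation turning the generating series of $\zeta^t$ over indices with admissible first entry into the sign-twisted generating series of $\zeta^{1-t}$ over the ordinarily admissible indices.

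I expect this last step to be the main obstacle. In contrast to the purely formal prefix/suffix bookkeeping, it is a genuine reversal (antipode) relation for interpolated multiple zeta values exchanging $t$ and $1-t$, and it must be proved for the harmonically regularized values, since the non-admissible prefixes and the reversed suffixes appearing above are defined only through regularization. The care needed is twofold: first to establish the reversal relation itself, for instance by expanding via $\zeta^t(\boldsymbol{k})=\sum_{\boldsymbol{p}}t^{\dep(\boldsymbol{k})-\dep(\boldsymbol{p})}\zeta(\boldsymbol{p})$ (sum over indices $\boldsymbol{p}$ obtained by merging consecutive entries of $\boldsymbol{k}$) and invoking the known reversal of regularized $\zeta$; and second to verify that the chosen regularization is compatible with the factorization, so that the product of the two one-sided series genuinely recovers the coefficients of the convergent series $\Phi_{x,y}^t$.
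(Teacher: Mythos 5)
Your reduction of the left-hand side is correct: expanding $\zeta_{x,y}^t$ by its definition and splitting each $\boldsymbol{k}\in I$ at the point $i$ does give $\Phi^t(xX,xY,x^2Z)-\Phi_{x,y}^{t}(X,Y,Z)=-P\cdot S$, where $P=\sum_{\boldsymbol{a}}(1-2t)^{\dep(\boldsymbol{a})}\zeta^t(\boldsymbol{a})x^{\wt(\boldsymbol{a})}u_{\boldsymbol{a}}$ (sum over all indices) and $S=\sum_{\boldsymbol{b}\in I}(1-2t)^{\dep(\boldsymbol{b})}\zeta^t(\overline{\boldsymbol{b}})y^{\wt(\boldsymbol{b})}u_{\boldsymbol{b}}$. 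But both identifications you then propose are false. First, $P$ is not the series $Q=\sum_{\boldsymbol{k}}(1-2t)^{\dep(\boldsymbol{k})}\zeta_{x,y}^{t}(\boldsymbol{k})u_{\boldsymbol{k}}$: the prefix factor carries only $\zeta^t(\boldsymbol{a})x^{\wt(\boldsymbol{a})}$, which is the single $i=\dep(\boldsymbol{a})$ term of $\zeta_{x,y}^t(\boldsymbol{a})$, not the whole sum; concretely, the coefficient of $Z$ in $P$ is $(1-2t)\zeta(2)x^2$, while in $Q$ it is $(1-2t)\zeta(2)(x^2+y^2)$. Second, the ``reversal relation'' $S=-\Phi^{1-t}(yX,yY,y^2Z)$, which you single out as the main obstacle, is not merely delicate because of regularization --- it is false: the coefficient of $Z^2$ in $S+\Phi^{1-t}(yX,yY,y^2Z)$ equals $(1-2t)^2y^4\bigl(\zeta^t(2,2)+\zeta^{1-t}(2,2)\bigr)=(1-2t)^2y^4\zeta(2)^2\neq0$, by the harmonic product $\zeta(2)^2=2\zeta(2,2)+\zeta(4)$. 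The antipode relation that actually holds (Yamamoto's; Lemma~\ref{lemma:antipode_t} of the paper) is a convolution identity, $\sum_{j=0}^{r}(-1)^{r-j}\zeta^t(k_j,\dots,k_1)\zeta^{1-t}(k_{j+1},\dots,k_r)=0$; only in depth one does it degenerate to a term-by-term equality, and it cannot be upgraded to an equality between the two one-sided generating series.

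The structural point you are missing is that $-PS$ and $\Phi^{1-t}(yX,yY,y^2Z)\cdot Q$ are equal as products but not factor by factor; the equality emerges only after cross terms are recombined. A correct continuation from your decomposition would be: note that $Q=P\cdot R$, where $R=\sum_{\boldsymbol{b}}(1-2t)^{\dep(\boldsymbol{b})}\zeta^t(\overline{\boldsymbol{b}})y^{\wt(\boldsymbol{b})}u_{\boldsymbol{b}}$ is your suffix-type series taken over \emph{all} indices (this is your prefix/suffix bookkeeping with no admissibility constraint), so that, after cancelling the invertible factor $P$, the lemma becomes the convolution identity $-S=\Phi^{1-t}(yX,yY,y^2Z)\cdot R$. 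Proving that identity requires expanding the product, regrouping over concatenations $(\boldsymbol{k}_1,\boldsymbol{k}_2)$, and applying Lemma~\ref{lemma:antipode_t} to the resulting inner alternating sums --- which is essentially the paper's proof run in the opposite direction: the paper expands the right-hand side $\Phi^{1-t}(yX,yY,y^2Z)\cdot Q$, groups by concatenation, and uses the antipode lemma to collapse each inner sum to $\zeta^t(\boldsymbol{k})x^{\wt(\boldsymbol{k})}-\zeta_{x,y}^{t}(\boldsymbol{k})$.
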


\begin{lemma}\label{lemma:symgene}
 We have 
 \begin{align*}
  &\sum_{\boldsymbol{k}}(1-2t)^{\dep(\boldsymbol{k})}\zeta_{x,y}^{t}(\boldsymbol{k})u_{\boldsymbol{k}}\\
  &=\exp\Biggl(\sum_{k=2}^{\infty}\frac{\zeta(k)}{k}(x^k+y^k)(\gamma_t^k+\delta_t^k-\gamma_{1-t}^{k}-\delta_{1-t}^{k})\Biggr).
 \end{align*}
\end{lemma}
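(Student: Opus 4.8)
The plan is to reduce the left-hand side to a product of two one-variable generating functions and then evaluate each factor in closed form. Write $\Psi^t(w)=\sum_{\boldsymbol{m}}(1-2t)^{\dep(\boldsymbol{m})}\zeta^t(\boldsymbol{m})w^{\wt(\boldsymbol{m})}u_{\boldsymbol{m}}$, the sum ranging over all indices (empty and non-admissible included, with $\zeta^t$ understood via harmonic regularization). The defining formula for $\zeta_{x,y}^t(\boldsymbol{k})$ expresses it as a sum over a cut-point $i$ of a product of a left factor $\zeta^t(k_1,\dots,k_i)x^{k_1+\dots+k_i}$ and a right factor $\zeta^t(k_r,\dots,k_{i+1})y^{k_{i+1}+\dots+k_r}$. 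First I would observe that $(\boldsymbol{k},i)\mapsto(\boldsymbol{a},\boldsymbol{b})$ with $\boldsymbol{a}=(k_1,\dots,k_i)$ and $\boldsymbol{b}=(k_r,\dots,k_{i+1})$ is a bijection onto all pairs of indices, under which $\dep$ and $\wt$ are additive and $u_{\boldsymbol{k}}=u_{\boldsymbol{a}}u_{\boldsymbol{b}}$ (since $u$ is multiplicative and insensitive to order). Hence the left-hand side factors as $\Psi^t(x)\Psi^t(y)$. Because the target right-hand side likewise factors (its exponent being additive in $x^k$ and $y^k$), it suffices to prove the single identity $\Psi^t(w)=\exp\bigl(\sum_{k\ge2}\frac{\zeta(k)}{k}w^k(\gamma_t^k+\delta_t^k-\gamma_{1-t}^k-\delta_{1-t}^k)\bigr)$.

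To evaluate $\Psi^t(w)$, I would pass to ordinary multiple zeta values via Yamamoto's expansion $\zeta^t(\boldsymbol{m})=\sum t^{\dep(\boldsymbol{m})-\dep(\boldsymbol{k}')}\zeta^{\mathrm{reg}}(\boldsymbol{k}')$, the sum over coarsenings $\boldsymbol{k}'$ of $\boldsymbol{m}$ (here $\zeta^{\mathrm{reg}}$ is the harmonic regularization, so the identity persists at non-admissible indices). Substituting and interchanging the order of summation groups the contributions of all refinements $\boldsymbol{m}$ of a fixed $\boldsymbol{k}'$. Since refinements act independently on the entries of $\boldsymbol{k}'$ and $u_{\boldsymbol{m}}$ is multiplicative, the inner sum factors over entries, and combining $(1-2t)^{\dep(\boldsymbol{m})}$, the Yamamoto weights $t^{\dep(\boldsymbol{m})-\dep(\boldsymbol{k}')}$, and $w^{\wt}$ yields a per-entry generating function equal to the composition closure $\tilde g(v)=t^{-1}qf(wv)/(1-qf(wv))$, where $q=t(1-2t)$ and $f(v)=Yv+Zv^2/(1-Xv)=\sum_{a\ge1}u_{(a)}v^a$. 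Its denominator $1-qf(wv)$ has reciprocal roots $\gamma_tw,\delta_tw$, which is exactly where $\gamma_t,\delta_t$ (Li--Qun's $\alpha_{t(1-2t)},\beta_{t(1-2t)}$) enter. This recasts $\Psi^t(w)$ as a fully regularized Ohno--Zagier-type sum $\sum_{\boldsymbol{k}'}\zeta^{\mathrm{reg}}(\boldsymbol{k}')\prod_j[v^{k_j'}]\tilde g(v)$ over all indices $\boldsymbol{k}'$.

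The crux is a master formula: for any $h$ with $h(0)=0$,
\[
\sum_{\boldsymbol{k}'}\zeta^{\mathrm{reg}}(\boldsymbol{k}')\prod_{j}[v^{k_j'}]h(v)=\exp\Biggl(\sum_{n\ge1}\zeta^{\mathrm{reg}}(n)\,[v^{n}]\log\bigl(1+h(v)\bigr)\Biggr).
\]
I would prove this Hopf-algebraically: $\zeta^{\mathrm{reg}}$ is a character of the harmonic (quasi-shuffle) algebra, while the generating series $\sum_{\boldsymbol{k}'}(\prod_j[v^{k_j'}]h)\boldsymbol{k}'=\bigl(1-\sum_n([v^n]h)z_n\bigr)^{-1}$ is grouplike for deconcatenation; writing it as the $*$-exponential of its primitive (pure-letter) part and applying the character converts the $*$-exponential into an ordinary exponential. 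The letter-coefficients are pinned down by projecting onto length-one words, where the only surviving contributions come from fully merged products of letters (each with coefficient one), producing the generating function $\log(1+h)$. Applying this with $h=\tilde g$ and using $\zeta^{\mathrm{reg}}(1)=0$ reduces everything to computing $1+\tilde g(v)$.

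Finally a short computation gives $1+\tilde g(v)=(1-\gamma_{1-t}wv)(1-\delta_{1-t}wv)/\bigl((1-\gamma_tw v)(1-\delta_tw v)\bigr)$: clearing denominators shows the numerator is $1-q'f(wv)$ with $q'=-(1-t)(1-2t)$, whose reciprocal roots are $\gamma_{1-t}w,\delta_{1-t}w$ by the very definition of $\gamma_{1-t},\delta_{1-t}$, and the spurious factor $1-Xwv$ cancels. Taking $\log$ and extracting $[v^n]$ then yields $[v^n]\log(1+\tilde g)=\frac{w^n}{n}(\gamma_t^n+\delta_t^n-\gamma_{1-t}^n-\delta_{1-t}^n)$, which together with the master formula gives the claim. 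I expect the main obstacle to be the master formula: making the regularization compatible with Yamamoto's expansion at non-admissible indices and justifying the grouplike/character argument (equivalently, the merging combinatorics) that produces the clean exponential; the remaining steps are bookkeeping and routine algebra.
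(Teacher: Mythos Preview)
Your argument is correct and follows a genuinely different route from the paper. The paper never splits the sum as $\Psi^t(x)\Psi^t(y)$; instead it keeps $\zeta_{x,y}^t$ intact and exploits the fact that the sum over all indices is invariant under permuting the entries, which lets it invoke Hoffman's symmetrization identity (their Lemma~\ref{lemma:sym_sum}) to rewrite $\sum_{\sigma\in\SSS_r}\zeta_{x,y}^t(k_{\sigma(1)},\dots,k_{\sigma(r)})$ as a sum over set partitions weighted by $c_{\lvert P\rvert}(t)\,\zeta_{x,y}\!\bigl(\sum_{i\in P}k_i\bigr)$, and then applies the exponential formula to pass from set partitions to an exponential. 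Your approach replaces this pair (symmetrization lemma $+$ exponential formula) by the pair (factorization via the cut-point definition of $\zeta_{x,y}^t$) $+$ (Yamamoto coarsening $+$ the grouplike/primitive master formula for the harmonic character). Both routes land on the same closing computation, namely factoring $1-sf(wT)$ with $f(T)=YT+ZT^2/(1-XT)$ and $s=t(1-2t)$ or $(1-t)(2t-1)$ into $(1-\gamma_{\bullet}wT)(1-\delta_{\bullet}wT)/(1-XwT)$. What your approach buys is a cleaner conceptual explanation of why the answer is an exponential in \emph{single} zeta values (it comes from a character applied to a grouplike element whose $\log_*$ is a sum of letters), and it isolates a reusable one-variable identity for $\Psi^t(w)$; what the paper's approach buys is that it avoids any explicit regularization bookkeeping at non-admissible indices, since Hoffman's symmetrization is stated directly for $\zeta_{x,y}^t$ and the single-variable values $\zeta_{x,y}(k)$ that appear are automatically well-defined.
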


Here and in the sequel, $\sum_{\boldsymbol{k}}$ denotes a sum over all indices $\boldsymbol{k}$.

\subsection{Proof of Lemma~\ref{lemma:Phi^t-Phi_x,y^t}}
\begin{lemma}\label{lemma:antipode_t}
 Let $(k_1,\dots,k_r)$ be a nonempty index.
 Then we have
 \[
  \sum_{j=0}^{r}(-1)^{r-j}\zeta^t(k_j,\dots,k_1)\zeta^{1-t}(k_{j+1},\dots,k_r)=0.
 \]
\end{lemma}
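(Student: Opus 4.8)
The plan is to recast the identity as the statement that two explicit linear functionals are convolution-inverse to each other in the quasi-shuffle Hopf algebra; the symmetry $t\leftrightarrow 1-t$ then emerges from a one-parameter group of ``merging'' operators, with essentially no arithmetic input about zeta values.

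First I would set up the algebraic framework. Let $\mathfrak{H}=\Q\langle e_1,e_2,\dots\rangle$ be the free associative algebra on letters indexed by positive integers, and identify an index $(k_1,\dots,k_r)$ with the word $e_{k_1}\cdots e_{k_r}$. Give $\mathfrak{H}$ the harmonic (quasi-shuffle) product $\ast$ and the deconcatenation coproduct $\Delta(e_{k_1}\cdots e_{k_r})=\sum_{j=0}^r e_{k_1}\cdots e_{k_j}\otimes e_{k_{j+1}}\cdots e_{k_r}$, so that $\mathfrak{H}$ becomes a connected graded Hopf algebra with counit $\epsilon$ and antipode $S$. Let $Z\colon\mathfrak{H}\to\R$ be the harmonically regularized evaluation $Z(e_{k_1}\cdots e_{k_r})=\zeta(k_1,\dots,k_r)$, which is an algebra homomorphism for $\ast$ by \cite{IKZ06}. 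Grouping the defining sum of $\zeta^s$ according to its equalities gives the combination formula $\zeta^s(\boldsymbol{k})=Z(\rho_s(e_{k_1}\cdots e_{k_r}))$ (cf.\ \cite{Yam13}), where $\rho_s\colon\mathfrak{H}\to\mathfrak{H}$ is the linear merging operator $\rho_s(e_{k_1}\cdots e_{k_r})=\sum_T s^{|T|}(e_{k_1}\cdots e_{k_r})/T$, the sum being over subsets $T$ of the $r-1$ gaps, with $/T$ adding the letters across the gaps in $T$. Writing $R$ for word reversal, $\ell(w)$ for the number of letters, $Z_s=Z\circ\rho_s$, and $\theta_t(w)=(-1)^{\ell(w)}Z(\rho_t(Rw))$, a direct expansion of the deconcatenation convolution $\star$ shows that the lemma is exactly the assertion $\theta_t\star Z_{1-t}=\epsilon$ (for the empty word both sides equal $1$, and multiplying by $(-1)^r$ recovers the stated signs).

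The proof then rests on two structural facts about the $\rho_s$. A short subset count — for a fixed total merge-set $W$ one has $\sum_{T\sqcup U=W}s^{|T|}s'^{|U|}=(s+s')^{|W|}$ — gives the one-parameter group law $\rho_s\circ\rho_{s'}=\rho_{s+s'}$, and a parallel bookkeeping shows that $\rho_s$ is a morphism of the deconcatenation coalgebra, i.e.\ $\Delta\rho_s=(\rho_s\otimes\rho_s)\Delta$ and $\epsilon\rho_s=\epsilon$. I would also record the quasi-shuffle antipode formula $S(w)=(-1)^{\ell(w)}\rho_1(Rw)$, which is standard and follows by induction from the recursive definition of $S$. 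Because $\rho_{1-t}$ is a coalgebra morphism and $Z$ is a character with convolution inverse $Z\circ S$, the convolution inverse of $Z_{1-t}$ is $(Z\circ S)\circ\rho_{1-t}$; hence $\theta_t\star Z_{1-t}=\epsilon$ reduces, by uniqueness of convolution inverses, to the single identity $\theta_t=Z\circ S\circ\rho_{1-t}$.

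Finally I would verify this last identity on a word $w$ of length $r$. Expanding $\rho_{1-t}w=\sum_T(1-t)^{|T|}(w/T)$ and applying the antipode formula termwise turns $Z(S(\rho_{1-t}w))$ into $(-1)^r\sum_T(t-1)^{|T|}Z(\rho_1(R(w/T)))$; since reversal carries the gaps of $w/T$ bijectively onto those of $Rw$, this equals $(-1)^r Z\bigl(\rho_1\bigl(\sum_{\overline T}(t-1)^{|\overline T|}(Rw)/\overline T\bigr)\bigr)=(-1)^r Z(\rho_1(\rho_{t-1}(Rw)))$, which collapses by the group law $\rho_1\rho_{t-1}=\rho_t$ to $(-1)^r Z(\rho_t(Rw))=\theta_t(w)$. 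The main obstacle is precisely this bookkeeping: one must check that substituting $1-t$ in the second factor and taking the signed reversal in the first factor conspire, through the antipode formula, into the one clean step $\rho_1\rho_{t-1}=\rho_t$, and this is exactly where the interpolation symmetry $t\leftrightarrow 1-t$ is genuinely used. Establishing the antipode formula and the coalgebra-compatibility of $\rho_s$ is the technical heart; the only analytic fact required is that $Z$ is a character, i.e.\ harmonic regularization.
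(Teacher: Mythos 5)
Your proof is correct, and I checked the key steps: the group law $\rho_s\circ\rho_{s'}=\rho_{s+s'}$, the coalgebra-morphism property $\Delta\rho_s=(\rho_s\otimes\rho_s)\Delta$ (the bijection between pairs (merge-set $T$, cut $c\notin T$) and pairs of merge-sets on the two factors works exactly as you say), the antipode formula $S(w)=(-1)^{\ell(w)}\rho_1(Rw)$ (which is Hoffman's formula for the quasi-shuffle Hopf algebra with deconcatenation coproduct; it checks out in lengths $1,2,3$ and by induction in general), and the final bookkeeping $Z\circ S\circ\rho_{1-t}=\theta_t$ via $\rho_1\rho_{t-1}=\rho_t$. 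However, the comparison with the paper is degenerate: the paper gives no argument at all, its entire proof being a citation of Yamamoto's Proposition 3.9 in \cite{Yam13}. What you have done is reconstruct, essentially from scratch, the algebra underlying that proposition --- the one-parameter group of interpolation operators, their compatibility with deconcatenation, and the quasi-shuffle antipode, with the character property of the harmonically regularized evaluation $Z$ as the only analytic input --- so in substance your route is the same mechanism the literature uses, but written as a self-contained proof rather than a pointer. Your version buys self-containedness and makes explicit exactly which standard facts are consumed (the antipode formula can be cited from \cite{Hof20}, the group law and the identification of $\zeta^t$ with $Z\circ\rho_t$ from \cite{Yam13}), at the cost of the Hopf-algebra apparatus the authors avoid by citation. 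Two small refinements: first, you do not need uniqueness of convolution inverses; once $\theta_t=Z\circ S\circ\rho_{1-t}$ is established, the lemma follows directly from $\theta_t\star Z_{1-t}=((Z\circ S)\star Z)\circ\rho_{1-t}=\epsilon\circ\rho_{1-t}=\epsilon$, using only that $\rho_{1-t}$ is a coalgebra map and $Z\circ S$ is a left convolution inverse of $Z$. Second, since the indices $(k_j,\dots,k_1)$ and $(k_{j+1},\dots,k_r)$ occurring in the lemma are generally non-admissible, you should state explicitly that the regularized $\zeta^t$ of the paper is defined to be (or, by Yamamoto's results, coincides with) $Z\circ\rho_t$ for such indices; that identification is what makes your reformulation of the lemma as $\theta_t\star Z_{1-t}=\epsilon$ legitimate on all of $\mathfrak{H}$ rather than only on admissible words.
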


\begin{proof}
 This is essentially the same as the formula given in {\cite[Proposition 3.9]{Yam13}}.
\end{proof}

\begin{proof}[Proof of Lemma~\ref{lemma:Phi^t-Phi_x,y^t}]
 Let $A$ denote the right-hand side of the desired equality.
 Since $u_{\boldsymbol{k}}$ with $(X,Y,Z)$ replaced by $(yX,yY,y^2Z)$ is equal to $y^{\wt(\boldsymbol{k})}u_{\boldsymbol{k}}$ for every index $\boldsymbol{k}$,
 we have
 \begin{align*}
  A&=\Biggl(\sum_{\boldsymbol{k}_2\in I}(1-2(1-t))^{\dep(\boldsymbol{k}_2)}\zeta^{1-t}(\boldsymbol{k}_2)y^{\wt(\boldsymbol{k}_2)}u_{\boldsymbol{k}_2}\Biggr)\\
   &\phantom{{}={}}\times\Biggl(\sum_{\boldsymbol{k}_1}(1-2t)^{\dep(\boldsymbol{k}_1)}\zeta_{x,y}^{t}(\boldsymbol{k}_1)u_{\boldsymbol{k}_1}\Biggr)\\
  &=\sum_{\boldsymbol{k}_1}\sum_{\boldsymbol{k}_2\in I}(-1)^{\dep(\boldsymbol{k}_2)}(1-2t)^{\dep(\boldsymbol{k}_1,\boldsymbol{k}_2)}\zeta_{x,y}^{t}(\boldsymbol{k}_1)\zeta^{1-t}(\boldsymbol{k}_2)y^{\wt(\boldsymbol{k}_2)}u_{(\boldsymbol{k}_1,\boldsymbol{k}_2)}\\
  &=\sum_{\boldsymbol{k}=(k_1,\dots,k_r)\in I}(1-2t)^{\dep(\boldsymbol{k})}\\
  &\phantom{{}={}}\times\Biggl(\sum_{j=0}^{r-1}(-1)^{r-j}\zeta_{x,y}^{t}(k_1,\dots,k_j)\zeta^{1-t}(k_{j+1},\dots,k_r)y^{k_{j+1}+\dots+k_r}\Biggr)u_{\boldsymbol{k}},
 \end{align*}
 where $(\boldsymbol{k}_1,\boldsymbol{k}_2)$ denotes the concatenation of the indices $\boldsymbol{k}_1$ and $\boldsymbol{k}_2$.
 Here for each $\boldsymbol{k}=(k_1,\dots,k_r)\in I$, we have
 \begin{align*}
  &\sum_{j=0}^{r-1}(-1)^{r-j}\zeta_{x,y}^{t}(k_1,\dots,k_j)\zeta^{1-t}(k_{j+1},\dots,k_r)y^{k_{j+1}+\dots+k_r}\\
  &=\sum_{j=0}^{r-1}(-1)^{r-j}\Biggl(\sum_{i=0}^{j}\zeta^t(k_1,\dots,k_i)\zeta^t(k_j,\dots,k_{i+1})x^{k_1+\dots+k_i}y^{k_{i+1}+\dots+k_r}\Biggr)\\
  &\hphantom{{}=\sum_{j=0}^{r-1}}\times\zeta^{1-t}(k_{j+1},\dots,k_r)\\
  &=\sum_{i=0}^{r-1}\zeta^t(k_1,\dots,k_i)\Biggl(\sum_{j=i}^{r-1}(-1)^{r-j}\zeta^t(k_j,\dots,k_{i+1})\zeta^{1-t}(k_{j+1},\dots,k_r)\Biggr)\\
  &\hphantom{{}=\sum_{i=0}^{r-1}}\times x^{k_1+\dots+k_i}y^{k_{i+1}+\dots+k_r}\\
  &=-\sum_{i=0}^{r-1}\zeta^t(k_1,\dots,k_i)\zeta^{t}(k_r,\dots,k_{i+1})x^{k_1+\dots+k_i}y^{k_{i+1}+\dots+k_r}\\
  &=\zeta^t(\boldsymbol{k})x^{\wt(\boldsymbol{k})}-\zeta_{x,y}^{t}(\boldsymbol{k}),
 \end{align*}
 where the third equality follows from Lemma~\ref{lemma:antipode_t}.
 Therefore multiplying by $(1-2t)^{\dep(\boldsymbol{k})}u_{\boldsymbol{k}}$ and summing over all $\boldsymbol{k}\in I$, we obtain
 \begin{align*}
  A&=\sum_{\boldsymbol{k}\in I}(1-2t)^{\dep(\boldsymbol{k})}(\zeta^t(\boldsymbol{k})x^{\wt(\boldsymbol{k})}-\zeta_{x,y}^{t}(\boldsymbol{k}))u_{\boldsymbol{k}}\\
   &=\Phi^t(xX,xY,x^2Z)-\Phi_{x,y}^{t}(X,Y,Z),
 \end{align*}
 as required.
\end{proof}

\subsection{Proof of Lemma~\ref{lemma:symgene}}
We write $\lvert P\rvert$ for the cardinality of a finite set $P$.
For each positive integer $r$, we write $\PPP_r$ for the set of all partitions of the set $\{1,\dots,r\}$
(recall that a \emph{partition} of a set $S$ is a family of nonempty disjoint subsets of $S$ whose union is $S$),
and we put $c_r(t)=(r-1)!(t^r-(t-1)^r)\in\Z[t]$.

\begin{lemma}\label{lemma:sym_sum}
 Let $r$ be a positive integer and $(k_1,\dots,k_r)$ be an index.
 Then we have
 \[
  \sum_{\sigma\in\SSS_r}\zeta_{x,y}^{t}(k_{\sigma(1)},\dots,k_{\sigma(r)})
  =\sum_{\Pi\in\PPP_r}\prod_{P\in\Pi}c_{\lvert P\rvert}(t)\zeta_{x,y}\Biggl(\sum_{i\in P}k_i\Biggr),
 \]
 where $\SSS_r$ denotes the symmetric group of degree $r$.
\end{lemma}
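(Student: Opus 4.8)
The plan is to reduce the identity to two independent ingredients: a \emph{deconcatenation} that rewrites the left-hand side as a product of symmetric sums of interpolated multiple zeta values over complementary subsets, and a \emph{symmetric-sum formula} for $\sum_{\sigma\in\SSS_r}\zeta^t(k_{\sigma(1)},\dots,k_{\sigma(r)})$ expressed through the polynomials $c_{\lvert P\rvert}(t)$. Throughout I abbreviate $k_P=\sum_{i\in P}k_i$ for $P\subseteq\{1,\dots,r\}$, write $\mathrm{Ord}(P)$ for the set of orderings of $P$, and set $F^t(P)=\sum_{\tau\in\mathrm{Ord}(P)}\zeta^t(k_\tau)$ (with $F^t(\emptyset)=1$), where $k_\tau$ is the index listing the $k_i$ in the order $\tau$.

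First I would expand the definition of $\zeta_{x,y}^t$, so that each summand of the left-hand side is indexed by a permutation $\sigma$ together with an internal cut point $i$. Reindexing this datum by the set $S=\{\sigma(1),\dots,\sigma(i)\}$, an ordering of $S$, and an ordering of its complement $T$—and noting that reversal is a bijection of $\mathrm{Ord}(T)$, so the reversed factor $\zeta^t(k_{\sigma(r)},\dots,k_{\sigma(i+1)})$ causes no difficulty—turns the left-hand side into $\sum_{S\sqcup T=\{1,\dots,r\}}F^t(S)\,x^{k_S}\,F^t(T)\,y^{k_T}$.

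The heart of the argument is the claim that $F^t(P)=\sum_{\Sigma}\prod_{Q\in\Sigma}c_{\lvert Q\rvert}(t)\,\zeta(k_Q)$, the sum running over all partitions $\Sigma$ of $P$. I would reindex each $\zeta^t(k_\tau)$ by the map sending a label to its value in the weakly increasing chain: the level sets form a partition $\Pi$, every admissible ordering $\tau$ carries the same weight $t^{\lvert P\rvert-\lvert\Pi\rvert}$, and there are $\prod_{P'\in\Pi}\lvert P'\rvert!$ of them. The delicate point is that distinct level sets must receive distinct values; I would handle this by Möbius inversion on the partition lattice, writing the distinct-value sum attached to $\Pi$ as $\sum_{\Sigma\ge\Pi}\mu(\Pi,\Sigma)\prod_{Q\in\Sigma}\zeta(k_Q)$ over coarser partitions $\Sigma$. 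Interchanging the order of summation and factoring over the blocks of $\Sigma$ then reduces the whole claim to the scalar identity
\[
 \sum_{\Pi\in\PPP_m}\Bigl(\prod_{P'\in\Pi}\lvert P'\rvert!\Bigr)\,t^{m-\lvert\Pi\rvert}\,(-1)^{\lvert\Pi\rvert-1}(\lvert\Pi\rvert-1)!=c_m(t),
\]
which I would prove by the exponential formula: the block exponential generating function is $z/(1-tz)$, weighting by the number $\ell$ of blocks with $(-1)^{\ell-1}(\ell-1)!$ amounts to applying $w\mapsto\log(1+w)$, and $\log\bigl(1+z/(1-tz)\bigr)=\log\frac{1-(t-1)z}{1-tz}=\sum_{m\ge1}\frac{z^m}{m}\bigl(t^m-(t-1)^m\bigr)$ has exactly the coefficients $c_m(t)/m!$.

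Finally, combining the two ingredients, a partition of $S$ together with a partition of $T$ is the same datum as a partition $\Sigma$ of $\{1,\dots,r\}$ with each block coloured left or right; summing over the two colours of each block $Q$ turns $x^{k_Q}$ and $y^{k_Q}$ into $x^{k_Q}+y^{k_Q}$, and since $\zeta_{x,y}(k)=\zeta(k)(x^k+y^k)$ for a single-component index this recovers $\prod_{Q\in\Sigma}c_{\lvert Q\rvert}(t)\,\zeta_{x,y}(k_Q)$, as required. I expect the main obstacle to be the symmetric-sum formula of the third paragraph—particularly the passage from the distinct-value constraint to a clean partition sum—with the generating-function identity for $c_m(t)$ serving as the crux that makes the combinatorial bookkeeping collapse.
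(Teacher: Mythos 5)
Your combinatorial skeleton is correct, and it takes a genuinely different route from the paper, whose entire proof is a citation: the paper notes that $\zeta_{x,y}^t$ obeys the same $t$-harmonic product rule as $\zeta^t$ and invokes Hoffman's quasi-shuffle theorem \cite[Theorem 13]{Hof20}, so the symmetric-sum formula comes for free for any such homomorphism. You instead reprove the interpolated symmetric-sum theorem from scratch, and the pieces do fit: the deconcatenation step (using that reversal is a bijection on orderings of the complement) correctly yields $\sum_{S\sqcup T}F^t(S)F^t(T)x^{k_S}y^{k_T}$; the level-set decomposition carries weight $t^{\lvert P\rvert-\lvert\Pi\rvert}$ with multiplicity $\prod_{P'\in\Pi}\lvert P'\rvert!$ as you claim; M\"obius inversion on the partition lattice, with $\mu(\Pi,\hat{1})=(-1)^{\lvert\Pi\rvert-1}(\lvert\Pi\rvert-1)!$ after factoring over blocks, reduces everything to your scalar identity; and the exponential formula with block generating function $z/(1-tz)$ and outer function $\log(1+w)$ gives coefficients $(t^m-(t-1)^m)/m=c_m(t)/m!$, so that identity is true. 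The final recolouring of blocks (left blocks contributing $x^{k_Q}$, right blocks $y^{k_Q}$) indeed reassembles $\zeta_{x,y}(k_Q)=\zeta(k_Q)(x^{k_Q}+y^{k_Q})$. What the two routes buy: yours is self-contained and elementary; the paper's is one line and, crucially, stays inside the algebraic framework where regularization is automatic.

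That last point is where your proposal has a genuine gap, which you never address. The lemma is stated for an arbitrary index, and the paper applies it (in the proof of Lemma~\ref{lemma:symgene}) to \emph{every} index, e.g.\ $(1,1,\dots,1)$; in such cases all the quantities involved are defined only via harmonic regularization. Your argument manipulates the defining series, but $\zeta^t(k_\tau)$ diverges whenever the last entry of $k_\tau$ equals $1$ --- which happens for some ordering $\tau$ unless every $k_i\ge2$ --- and $\zeta(k_Q)=\zeta(1)$ diverges whenever $Q$ is a singleton $\{i\}$ with $k_i=1$. Rearranging divergent series and performing M\"obius inversion on them is not justified, and the identity at non-admissible indices is really a statement about regularized values: for $(k_1,k_2)=(1,1)$, both sides equal $(2t-1)\zeta(2)(x^2+y^2)$ only after substituting the regularized values $\zeta(1)=0$ and $\zeta(1,1)=-\zeta(2)/2$, a cancellation your series picture cannot see. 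The standard repair preserves your combinatorics verbatim: run the entire argument on truncated sums (all summation variables $\le N$), where every step is an identity between finite sums, and then pass to regularized values using the characterization of harmonic regularization by asymptotic expansions in $\log N+\gamma$ (see \cite{IKZ06}); equivalently, phrase the argument inside the quasi-shuffle algebra and apply the regularized evaluation homomorphism, which is exactly the paper's shortcut. So your self-assessment is slightly off: the distinct-value bookkeeping you worried about is fine, and the missing ingredient is the regularization layer.
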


\begin{proof}
 The lemma follows from \cite[Theorem 13]{Hof20}; see \cite[Equation~(8.4)]{Hof20} for further details and
 note that $\zeta^t$ and $\zeta_{x,y}^t$ enjoy the same $t$-harmonic product rule.
\end{proof}

\begin{proof}[Proof of Lemma~\ref{lemma:symgene}]
 Lemma~\ref{lemma:sym_sum} implies that
 \begin{align*}
  &\sum_{\boldsymbol{k}\ne\emptyset}(1-2t)^{\dep(\boldsymbol{k})}\zeta_{x,y}^{t}(\boldsymbol{k})u_{\boldsymbol{k}}\\
  &=\sum_{r=1}^{\infty}(1-2t)^r\sum_{k_1,\dots,k_r\ge1}\zeta_{x,y}^{t}(k_1,\dots,k_r)u_{(k_1,\dots,k_r)}\\
  &=\sum_{r=1}^{\infty}\frac{(1-2t)^r}{r!}\sum_{\substack{k_1,\dots,k_r\ge1\\\sigma\in\SSS_r}}\zeta_{x,y}^{t}(k_{\sigma(1)},\dots,k_{\sigma(r)})u_{k_1}\dotsm u_{k_r}\\
  &=\sum_{r=1}^{\infty}\frac{(1-2t)^r}{r!}\sum_{\substack{k_1,\dots,k_r\ge1\\\Pi\in\PPP_r}}\prod_{P\in\Pi}\Biggl(c_{\lvert P\rvert}(t)\zeta_{x,y}\Biggl(\sum_{i\in P}k_i\Biggr)\prod_{i\in P}u_{k_i}\Biggr).
 \end{align*}
 Now for each positive integer $r$, we have
 \begin{align*}
  &\sum_{\substack{k_1,\dots,k_r\ge1\\\Pi\in\PPP_r}}\prod_{P\in\Pi}\Biggl(c_{\lvert P\rvert}(t)\zeta_{x,y}\Biggl(\sum_{i\in P}k_i\Biggr)\prod_{i\in P}u_{k_i}\Biggr)\\
  &=\sum_{\Pi\in\PPP_r}\prod_{P\in\Pi}\Biggl(c_{\lvert P\rvert}(t)\sum_{\dep(\boldsymbol{k})=\lvert P\rvert}\zeta_{x,y}(\wt(\boldsymbol{k}))u_{\boldsymbol{k}}\Biggr)\\
  &=\sum_{l=1}^{r}\sum_{\substack{\Pi\in\PPP_r\\\lvert\Pi\rvert=l}}\prod_{P\in\Pi}\Biggl(c_{\lvert P\rvert}(t)\sum_{\dep(\boldsymbol{k})=\lvert P\rvert}\zeta_{x,y}(\wt(\boldsymbol{k}))u_{\boldsymbol{k}}\Biggr)\\
  &=\sum_{l=1}^{r}\frac{1}{l!}\sum_{f\in\Surj(r,l)}\prod_{j=1}^{l}\Biggl(c_{\lvert f^{-1}(j)\rvert}(t)\sum_{\dep(\boldsymbol{k})=\lvert f^{-1}(j)\rvert}\zeta_{x,y}(\wt(\boldsymbol{k}))u_{\boldsymbol{k}}\Biggr)\\
  &=\sum_{l=1}^{r}\frac{1}{l!}\sum_{\substack{r_1,\dots,r_l\ge1\\r_1+\dots+r_l=r}}\frac{r!}{r_1!\dotsm r_l!}\prod_{j=1}^{l}\Biggl(c_{r_j}(t)\sum_{\dep(\boldsymbol{k})=r_j}\zeta_{x,y}(\wt(\boldsymbol{k}))u_{\boldsymbol{k}}\Biggr),
 \end{align*}
 where $\Surj(r,l)$ denotes the set of all surjections from the set $\{1,\dots,r\}$ to the set $\{1,\dots,l\}$.
 It follows that
 \begin{align*}
  &\sum_{\boldsymbol{k}\ne\emptyset}(1-2t)^{\dep(\boldsymbol{k})}\zeta_{x,y}^{t}(\boldsymbol{k})u_{\boldsymbol{k}}\\
  &=\sum_{r=1}^{\infty}\frac{(1-2t)^r}{r!}\sum_{l=1}^{r}\frac{1}{l!}\sum_{\substack{r_1,\dots,r_l\ge1\\r_1+\dots+r_l=r}}\frac{r!}{r_1!\dotsm r_l!}\prod_{j=1}^{l}\Biggl(c_{r_j}(t)\sum_{\dep(\boldsymbol{k})=r_j}\zeta_{x,y}(\wt(\boldsymbol{k}))u_{\boldsymbol{k}}\Biggr)\\
  &=\sum_{l=1}^{\infty}\frac{1}{l!}\sum_{r_1,\dots,r_l\ge1}\prod_{j=1}^{l}\Biggl(\frac{(1-2t)^{r_j}}{r_j!}c_{r_j}(t)\sum_{\dep(\boldsymbol{k})=r_j}\zeta_{x,y}(\wt(\boldsymbol{k}))u_{\boldsymbol{k}}\Biggr)\\
  &=\sum_{l=1}^{\infty}\frac{1}{l!}\Biggl(\sum_{r=1}^{\infty}\frac{(1-2t)^r}{r!}c_r(t)\sum_{\dep(\boldsymbol{k})=r}\zeta_{x,y}(\wt(\boldsymbol{k}))u_{\boldsymbol{k}}\Biggr)^l\\
  &=\exp\Biggl(\sum_{r=1}^{\infty}\frac{(1-2t)^r}{r!}c_r(t)\sum_{\dep(\boldsymbol{k})=r}\zeta_{x,y}(\wt(\boldsymbol{k}))u_{\boldsymbol{k}}\Biggr)-1.
 \end{align*}

 Therefore it suffices to show that
 \begin{align*}
  &\sum_{r=1}^{\infty}\frac{(1-2t)^r}{r!}c_r(t)\sum_{\dep(\boldsymbol{k})=r}\zeta_{x,y}(\wt(\boldsymbol{k}))u_{\boldsymbol{k}}\\
  &=\sum_{k=2}^{\infty}\frac{\zeta(k)}{k}(x^k+y^k)(\gamma_t^k+\delta_t^k-\gamma_{1-t}^{k}-\delta_{1-t}^{k}).
 \end{align*}
 Since in $\Q[X,Y,Z][[T]]$ we have
 \begin{align*}
  &\sum_{k=1}^{\infty}\Biggl(\sum_{r=1}^{\infty}\frac{(1-2t)^r}{r!}c_r(t)\sum_{\substack{\dep(\boldsymbol{k})=r\\\wt(\boldsymbol{k})=k}}u_{\boldsymbol{k}}\Biggr)T^k\\
  &=\sum_{r=1}^{\infty}\frac{(1-2t)^r}{r!}c_r(t)\sum_{k_1,\dots,k_r\ge1}u_{k_1}\dotsm u_{k_r}T^{k_1+\dots+k_r}\\
  &=\sum_{r=1}^{\infty}\frac{(1-2t)^r}{r!}c_r(t)\Biggl(\sum_{k=1}^{\infty}u_kT^k\Biggr)^r\\
  &=\sum_{r=1}^{\infty}\frac{(t(1-2t))^r-((1-t)(2t-1))^r}{r}\Biggl(YT+\frac{ZT^2}{1-XT}\Biggr)^r\\
  &=-\log\frac{1-t(1-2t)\bigl(YT+\frac{ZT^2}{1-XT}\bigr)}{1-(1-t)(2t-1)\bigl(YT+\frac{ZT^2}{1-XT}\bigr)}\\
  &=-\log\frac{1-(X+t(1-2t)Y)T+t(1-2t)(XY-Z)T^2}{1-(X+(1-t)(2t-1)Y)T+(1-t)(2t-1)(XY-Z)T^2}\\
  &=-\log\frac{(1-\gamma_tT)(1-\delta_tT)}{(1-\gamma_{1-t}T)(1-\delta_{1-t}T)}\\
  &=\sum_{k=1}^{\infty}\frac{\gamma_t^k+\delta_t^k-\gamma_{1-t}^{k}-\delta_{1-t}^{k}}{k}T^k,
 \end{align*}
 we obtain
 \begin{align*}
  &\sum_{r=1}^{\infty}\frac{(1-2t)^r}{r!}c_r(t)\sum_{\dep(\boldsymbol{k})=r}\zeta_{x,y}(\wt(\boldsymbol{k}))u_{\boldsymbol{k}}\\
  &=\sum_{k=1}^{\infty}\Biggl(\sum_{r=1}^{\infty}\frac{(1-2t)^r}{r!}c_r(t)\sum_{\substack{\dep(\boldsymbol{k})=r\\\wt(\boldsymbol{k})=k}}u_{\boldsymbol{k}}\Biggr)\zeta_{x,y}(k)\\
  &=\sum_{k=1}^{\infty}\frac{\gamma_t^k+\delta_t^k-\gamma_{1-t}^{k}-\delta_{1-t}^{k}}{k}\zeta_{x,y}(k)\\
  &=\sum_{k=1}^{\infty}\frac{\zeta(k)}{k}(x^k+y^k)(\gamma_t^k+\delta_t^k-\gamma_{1-t}^{k}-\delta_{1-t}^{k}),
 \end{align*}
 as required.
\end{proof}

\section{Corollaries}
We write $\Phi_S^t(X,Y,Z)=\Phi_{1,-1}^t(X,Y,Z)$ and $\Phi_S^{\star}(X,Y,Z)=\Phi_S^1(X,Y,Z)$.

\begin{corollary}\label{corollary:mod_zeta2}
 We have
 \[
  \Phi_S^t(X,Y,Z)\equiv\Phi^t(X,Y,Z)-\Phi^{1-t}(-X,-Y,Z)\pmod{\zeta(2)\ZZ}.
 \]
\end{corollary}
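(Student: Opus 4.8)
The plan is to specialize the main theorem (Theorem~\ref{theorem:main1}) at $x=1$, $y=-1$ and then discard everything divisible by $\zeta(2)$, interpreting the congruence $\pmod{\zeta(2)\ZZ}$ coefficient-wise in the power series ring. First I would substitute $x=1$ and $y=-1$ into the formula for $\Phi_{x,y}^t$. Since $x^2=y^2=1$, the triples $(xX,xY,x^2Z)$ and $(yX,yY,y^2Z)$ become $(X,Y,Z)$ and $(-X,-Y,Z)$ respectively, so that
\[
 \Phi_S^t(X,Y,Z)=\Phi^t(X,Y,Z)-\Phi^{1-t}(-X,-Y,Z)\exp(E),
\]
where
\[
 E=\sum_{k=2}^{\infty}\frac{\zeta(k)}{k}\bigl(1+(-1)^k\bigr)\bigl(\gamma_t^k+\delta_t^k-\gamma_{1-t}^{k}-\delta_{1-t}^{k}\bigr).
\]

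The crucial observation is that $1+(-1)^k$ vanishes for odd $k$, so $E$ is effectively a sum over even $k\ge2$ only, in which each coefficient carries a factor $\zeta(2m)$ for some $m\ge1$. By Euler's evaluation, $\zeta(2m)$ is a rational multiple of $\zeta(2)^m$ and hence lies in $\zeta(2)\ZZ$ for every $m\ge1$. Consequently every coefficient of the power series $E$ (as an element of $\ZZ[t][[X,Y,Z]]$) lies in $\zeta(2)\ZZ$. Since $E$ moreover has no constant term in $X,Y,Z$, the series $\exp(E)-1=\sum_{n\ge1}E^n/n!$ is a well-defined power series all of whose coefficients again lie in $\zeta(2)\ZZ$, and therefore $\exp(E)\equiv1\pmod{\zeta(2)\ZZ}$.

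Finally I would invoke the fact that $\ZZ$ is a $\Q$-algebra and $\zeta(2)\ZZ$ is an ideal in it. Writing
\[
 \Phi^{1-t}(-X,-Y,Z)\exp(E)=\Phi^{1-t}(-X,-Y,Z)+\Phi^{1-t}(-X,-Y,Z)\bigl(\exp(E)-1\bigr),
\]
I note that the coefficients of the last product lie in $\ZZ\cdot\zeta(2)\ZZ\subseteq\zeta(2)\ZZ$, whence $\Phi^{1-t}(-X,-Y,Z)\exp(E)\equiv\Phi^{1-t}(-X,-Y,Z)\pmod{\zeta(2)\ZZ}$; substituting this back gives the claimed congruence. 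I do not anticipate a genuine obstacle here: the only substantive input beyond the main theorem is the classical fact that even zeta values are rational multiples of powers of $\zeta(2)$, and everything else is routine bookkeeping with the ideal $\zeta(2)\ZZ$.
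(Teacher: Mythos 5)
Your proposal is correct and is essentially identical to the paper's proof, which likewise sets $x=1$, $y=-1$ in Theorem~\ref{theorem:main1} and observes that $\zeta(k)\in\zeta(2)\ZZ$ for even $k$, so that the factor $x^k+y^k=1+(-1)^k$ kills the odd terms and the exponential becomes congruent to $1$ modulo $\zeta(2)\ZZ$. The extra bookkeeping you supply (that $\zeta(2)\ZZ$ is an ideal, so $\exp(E)-1$ has coefficients in $\zeta(2)\ZZ$) is exactly the routine detail the paper leaves implicit.
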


\begin{proof}
 Set $x=1$ and $y=-1$ in Theorem~\ref{theorem:main1}, and observe that $\zeta(k)$ is a multiple of $\zeta(2)$ whenever $k$ is a even positive integer.
\end{proof}

\begin{corollary}
 We have
 \[
  \sum_{\boldsymbol{k}\in I}(\zeta_{S}^{\star}(\boldsymbol{k})-\zeta^{\star}(\boldsymbol{k}))u_{\boldsymbol{k}}
  =\frac{Z}{XY-Z}\biggl(\frac{\Gamma(1+X)\Gamma(1-Y)}{\Gamma(1-\eta)\Gamma(1-\gamma)}-1\biggr)\pmod{\zeta(2)\ZZ},
 \]
 where $\eta$ and $\xi$ satisfy $\eta+\xi=-X+Y$ and $\eta\xi=-Z$.
\end{corollary}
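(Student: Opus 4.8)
The plan is to rewrite the left-hand side in terms of the generating functions $\Phi_S^{\star}$ and $\Phi^{\star}$ by means of a sign change of variables, then to apply Corollary~\ref{corollary:mod_zeta2} at $t=1$ so as to reduce everything modulo $\zeta(2)\ZZ$ to the single Ohno--Zagier generating function $\Phi$, and finally to invoke Theorem~\ref{theorem:OZ} after one further substitution. The reward of this route is that no new computation is needed beyond bookkeeping: every analytic input is already quoted.

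First I would record how the substitution $(X,Y,Z)\mapsto(X,-Y,-Z)$ acts on the monomial $u_{\boldsymbol{k}}=X^{\wt(\boldsymbol{k})-\dep(\boldsymbol{k})-\htt(\boldsymbol{k})}Y^{\dep(\boldsymbol{k})-\htt(\boldsymbol{k})}Z^{\htt(\boldsymbol{k})}$: it multiplies $u_{\boldsymbol{k}}$ by $(-1)^{(\dep(\boldsymbol{k})-\htt(\boldsymbol{k}))+\htt(\boldsymbol{k})}=(-1)^{\dep(\boldsymbol{k})}$. Since $\Phi_S^{\star}(X,Y,Z)=\Phi_{1,-1}^{1}(X,Y,Z)=\sum_{\boldsymbol{k}\in I}(-1)^{\dep(\boldsymbol{k})}\zeta_S^{\star}(\boldsymbol{k})u_{\boldsymbol{k}}$ and $\Phi^{\star}(X,Y,Z)=\sum_{\boldsymbol{k}\in I}(-1)^{\dep(\boldsymbol{k})}\zeta^{\star}(\boldsymbol{k})u_{\boldsymbol{k}}$, this substitution exactly cancels the factor $(-1)^{\dep(\boldsymbol{k})}$, so that
\[
\Phi_S^{\star}(X,-Y,-Z)-\Phi^{\star}(X,-Y,-Z)=\sum_{\boldsymbol{k}\in I}(\zeta_S^{\star}(\boldsymbol{k})-\zeta^{\star}(\boldsymbol{k}))u_{\boldsymbol{k}},
\]
which is precisely the left-hand side of the corollary.

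Next I would specialize Corollary~\ref{corollary:mod_zeta2} to $t=1$. Using $\Phi_S^{1}=\Phi_S^{\star}$, $\Phi^{1}=\Phi^{\star}$, and $\Phi^{0}=\Phi$, it reads $\Phi_S^{\star}(X,Y,Z)-\Phi^{\star}(X,Y,Z)\equiv-\Phi(-X,-Y,Z)\pmod{\zeta(2)\ZZ}$. Performing the substitution $(X,Y,Z)\mapsto(X,-Y,-Z)$ on both sides of this congruence (which affects only the formal variables, not the coefficients in $\ZZ$, so the congruence is preserved) then gives
\[
\sum_{\boldsymbol{k}\in I}(\zeta_S^{\star}(\boldsymbol{k})-\zeta^{\star}(\boldsymbol{k}))u_{\boldsymbol{k}}\equiv-\Phi(-X,Y,-Z)\pmod{\zeta(2)\ZZ}.
\]

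Finally I would evaluate $\Phi(-X,Y,-Z)$ by substituting $(X,Y,Z)\mapsto(-X,Y,-Z)$ into the Ohno--Zagier formula of Theorem~\ref{theorem:OZ}. The prefactor $Z/(XY-Z)$ is preserved, since both numerator and denominator merely change sign; the factor $\Gamma(1-X)$ becomes $\Gamma(1+X)$; and the auxiliary roots now satisfy $\alpha+\beta=-X+Y$ and $\alpha\beta=-Z$, which are precisely the relations defining $\eta$ and $\xi$. Negating the resulting expression produces the claimed closed form, with denominator $\Gamma(1-\eta)\Gamma(1-\xi)$ (the $\gamma$ in the statement being a misprint for $\xi$). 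The only point requiring care throughout is the sign bookkeeping in the two coordinate changes; beyond that, the argument is a direct chain of the quoted results.
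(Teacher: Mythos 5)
Your proof is correct and follows the paper's own argument exactly: the same substitution $(X,Y,Z)\mapsto(X,-Y,-Z)$ to remove the factor $(-1)^{\dep(\boldsymbol{k})}$, then Corollary~\ref{corollary:mod_zeta2} at $t=1$, then Theorem~\ref{theorem:OZ}; you merely spell out the final substitution into the Ohno--Zagier formula, which the paper leaves implicit. Your observation that the $\gamma$ in the statement is a misprint for $\xi$ is also correct.
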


\begin{proof}
 Since $u_{\boldsymbol{k}}$ with $(X,Y,Z)$ replaced by $(X,-Y,-Z)$ is equal to $(-1)^{\dep(\boldsymbol{k})}u_{\boldsymbol{k}}$, we have
 \begin{align*}
  \sum_{\boldsymbol{k}\in I}(\zeta_S^{\star}(\boldsymbol{k})-\zeta^{\star}(\boldsymbol{k}))u_{\boldsymbol{k}}
  &=\Phi_S^{\star}(X,-Y,-Z)-\Phi^{\star}(X,-Y,-Z)\\
  &\equiv-\Phi(-X,Y,-Z)\pmod{\zeta(2)\ZZ}
 \end{align*}
 by Corollary~\ref{corollary:mod_zeta2} with $t=1$.
 Therefore the corollary follows from Theorem~\ref{theorem:OZ}.
\end{proof}

\begin{remark}
 Fujita and Komori \cite{FK21} computed the same sum
 \[
  \sum_{\boldsymbol{k}\in I}(\zeta_{S}^{\star}(\boldsymbol{k})-\zeta^{\star}(\boldsymbol{k}))u_{\boldsymbol{k}}
 \]
 modulo $\zeta(2)\ZZ$ using differential equations, in contrast to the algebraic nature of our proof.
\end{remark}

\section*{Acknowledgements}
The authors would like to thank Professor Yasushi Komori and Kento Fujita for sending us their paper before its publication.
This work was supported by JSPS KAKENHI Grant Numbers JP18J00982, JP18K03243, JP18K13392, JP22K03244, and JP22K13897,
by Grant for Basic Science Research Projects from The Sumitomo Foundation, and by Research Funding Granted by The University of Kitakyushu.

\end{document}